\documentclass[11pt]{article}
\usepackage[margin=1in]{geometry}
\usepackage[T1]{fontenc}
\usepackage[utf8]{inputenc}
\usepackage{lmodern}
\usepackage{microtype}
\usepackage{mathtools,amssymb,amsthm,mathrsfs}
\usepackage{graphicx}
\usepackage{enumitem}
\usepackage{algorithm}
\usepackage{algpseudocode}
\usepackage{xcolor}
\usepackage{hyperref}
\usepackage[nameinlink,noabbrev]{cleveref}

\hypersetup{
  colorlinks=true,
  linkcolor=blue!55!black,
  citecolor=blue!55!black,
  urlcolor=blue!55!black
}

\newcommand{\R}{\mathbb{R}}

\newcommand{\range}{\mathrm{range}}
\newcommand{\Null}{\mathrm{Null}}

\newcommand{\ip}[2]{\langle #1,#2\rangle}
\newcommand{\norm}[1]{\lVert #1\rVert}

\newcommand{\ball}[2]{\mathbb{B}_{#1}(#2)}
\newcommand{\cO}{\mathcal{O}}

\newtheorem{theorem}{Theorem}[section]

\newtheorem{lemma}[theorem]{Lemma}

\theoremstyle{definition}
\newtheorem{assumption}[theorem]{Assumption}
\newtheorem{remark}[theorem]{Remark}

\numberwithin{equation}{section}

\title{A short proof of near-linear convergence of adaptive gradient descent\\under fourth-order growth and convexity}
\author{Damek Davis\thanks{Department of Statistics and Data Science, The Wharton School, University of Pennsylvania, Philadelphia, PA 19104; \texttt{damek@wharton.upenn.edu}. Research supported by an Alfred P.\ Sloan research fellowship and NSF DMS award 2047637.}
\and Dmitriy Drusvyatskiy\thanks{Halicio\u{g}lu Data Science Institute, University of California, San Diego, La Jolla, CA 92093; \texttt{ddrusv@ucsd.edu}. Research supported by NSF DMS-2306322, NSF CCF 1740551, and AFOSR FA9550-24-1-0092.}}
\date{}

\begin{document}
\maketitle

\begin{abstract}
Davis, Drusvyatskiy, and Jiang~\cite{DDJ} showed that gradient descent with an adaptive stepsize converges locally at a nearly-linear rate for smooth functions that grow at least quartically away from their minimizers. The argument is intricate, relying on monitoring the performance of the algorithm relative to a certain manifold of slow growth---called the \emph{ravine}. In this work, we provide a direct Lyapunov-based argument that bypasses these difficulties when the objective is in addition convex and a has a unique minimizer. As a byproduct of the argument, we obtain a more adaptive variant than the original algorithm with encouraging numerical performance.
\end{abstract}

\section{Introduction}

The recent paper~\cite{DDJ} proved that gradient descent with an adaptive stepsize converges locally at a nearly-linear rate for functions that grow at least quartically away from their solution set, even when the Hessian $H:=\nabla^2 f(0)$ is singular.
Their proof is organized around the \emph{ravine}, a smooth manifold containing the minimizer and along which $f$ behaves like a pure fourth power of the distance. The Hessian $H:=\nabla^2 f(0)$ splits the space: along its range, $f$ is locally quadratic; along its null space, growth is only quartic. Constant length gradient steps quickly drive the iterates toward the ravine by shrinking the quadratic part of~$f$. Once the iterates are sufficiently close to the ravine, a single Polyak step contracts the distance to the minimizer. Thus the algorithm interleaves gradient steps (to approach the ravine) with Polyak steps (to contract towards the minimizer). The proof of nearly-linear convergence is sophisticated, and is based on carefully monitoring the interaction between the iterates and the ravine.

In this work, we provide a direct and elementary proof of convergence under two additional assumptions: the minimizer is isolated and the function is convex. The Lyapunov argument we present bypasses the ravine construction entirely. Note that convexity does not trivialize the geometry of the ravine itself; this manifold can still be highly nonlinear. For instance, the function
\[
f(v,u)=\tfrac12(v+u^4)^2+u^4
\]
is convex near the origin with $\nabla^2 f(0)=\operatorname{diag}(1,0)$, but the ravine is the curve $v=-u^4$, not a subspace (\Cref{fig:ravine}).

\begin{figure}[ht]
\centering
\includegraphics[width=0.48\textwidth]{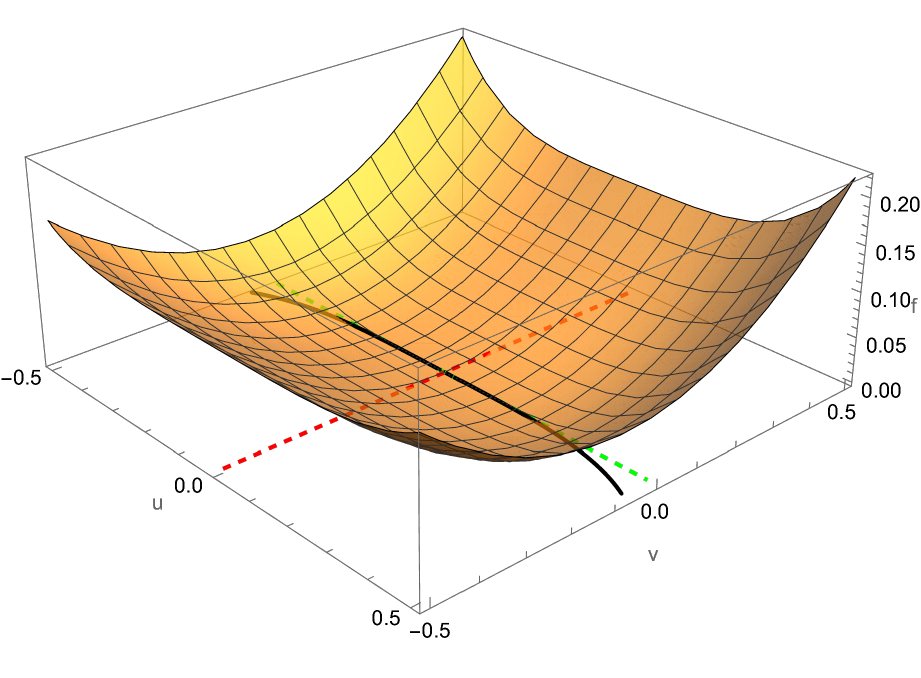}\hfill
\includegraphics[width=0.48\textwidth]{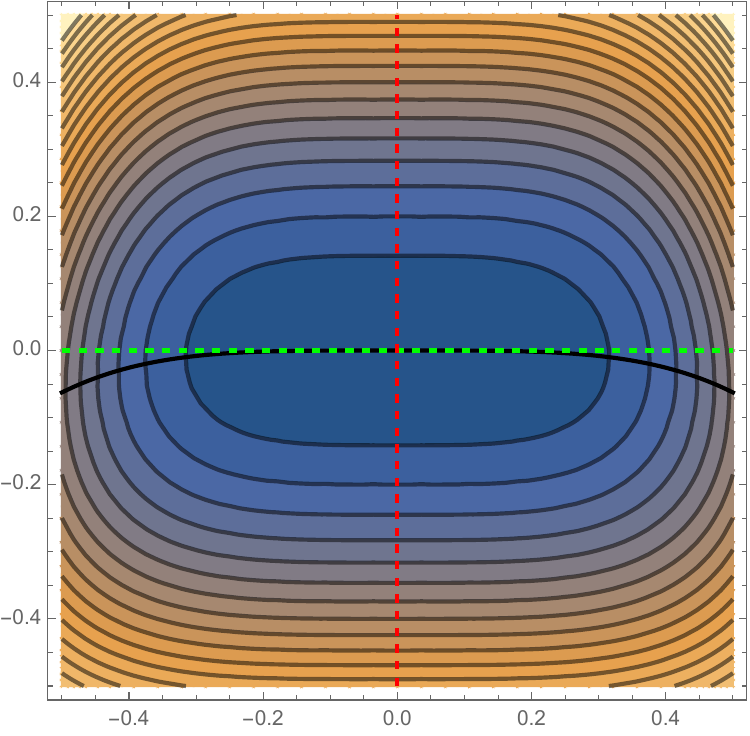}
\caption{The function $f(v,u)=\frac12(v+u^4)^2+u^4$ near the origin. \emph{Left:} surface plot. \emph{Right:} level sets. The solid black curve is the ravine $v=-u^4$; the dashed green line is the null space $\mathsf Q=\{v=0\}$; the dashed red line is the range $\mathsf P=\{u=0\}$.}
\label{fig:ravine}
\end{figure}

The simplification is instead in the proof mechanism. Setting the stage, write $\mathsf Q:=\Null(H)$, $\mathsf P:=\range(H)$, and let $P$ denote the orthogonal projection onto $\mathsf P$. Under constant-stepsize gradient descent, we show that the squared projected gradient $G(x):=\norm{P\nabla f(x)}^2$ contracts at a linear rate $1-\Theta(\eta)$ per step, until the iterate enters a region where the function behaves like a pure fourth power of the distance. Compared to the proof in the general case, which monitors the ravine explicitly, the projected-gradient contraction drives the iterate into the fourth-power regime directly. This new argument also simplifies the algorithm. Instead of the inner-outer loop structure in~\cite{DDJ}---where gradient and Polyak steps are scheduled in blocks of prescribed length---the algorithm (\Cref{alg:main}) runs gradient descent with a fixed stepsize $\eta$ and switches to a Polyak step whenever the ratio $R(x):=f(x)/\norm{\nabla f(x)}^{4/3}$ exceeds a threshold $\tau$, signaling that the iterate is in the fourth-power regime. When $R(x)\ge\tau$, the Polyak step contracts the distance by a factor $\sqrt{1-\tau^{3/2}\sqrt{m_0}}$.

The essential role of convexity is to eliminate the quadratic coupling $P\nabla^3 f(0)[u,u]$ in the Taylor expansion of the projected gradient. Writing $x=u+v$ with $u\in\mathsf Q$ and $v\in\mathsf P$, the general expansion reads
\[
P\nabla f(x)=Hv+\tfrac12 P\nabla^3 f(0)[u,u]+O\bigl(\norm{u}^3+\norm{u}\norm{v}+\norm{v}^2\bigr).
\]
Convexity forces $P\nabla^3 f(0)[u,u]=0$ for all $u\in\mathsf Q$ (\Cref{lem:vanishing}\ref{item:convex-kills-b}), so the projected gradient reduces to $Hv$ plus higher-order terms, and gradient steps in the fixed $\mathsf P/\mathsf Q$ splitting drive $\norm{v}$ to $O(\norm{u}^3)$. Once there, quartic aiming $\ip{\nabla f(x)}{x}\approx 4f(x)$ holds due to fourth-order behavior. Without the cubic vanishing, gradient steps still drive $\norm{v}$ to $O(\norm{u}^2)$, but the ravine curves away from $\mathsf Q$ at the same scale, so the fixed $\mathsf P/\mathsf Q$ splitting cannot separate tangential from normal dynamics. The original argument~\cite{DDJ} instead of $P$ uses a projection adapted to the ravine's normal bundle and works in coordinates that track this curvature. Replacing $u^4$ by $u^2$ in the earlier example illustrates the contrast: $g(v,u):=\tfrac12(v+u^2)^2+u^4$ has $\nabla^2 g(0)=\operatorname{diag}(1,0)$ and satisfies fourth-order growth, but $P\nabla^3 g(0)[e_u,e_u]=2e_v\neq 0$ and the ravine is the parabola $v=-u^2$ (\Cref{fig:ravine-nonconvex}).

\begin{figure}[ht]
\centering
\includegraphics[width=0.48\textwidth]{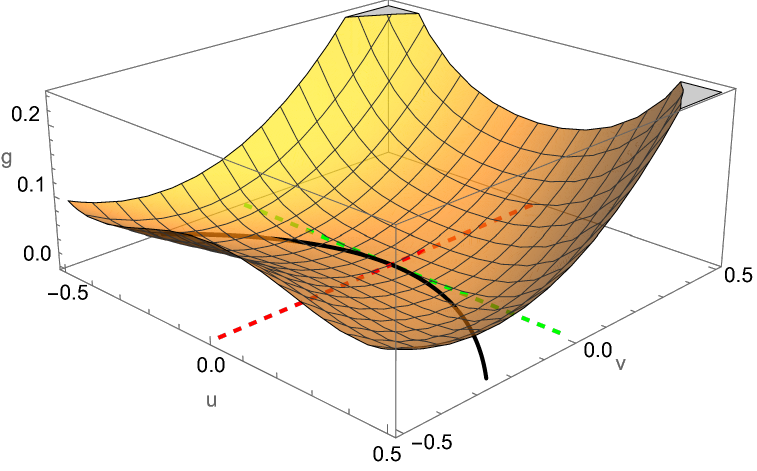}\hfill
\includegraphics[width=0.48\textwidth]{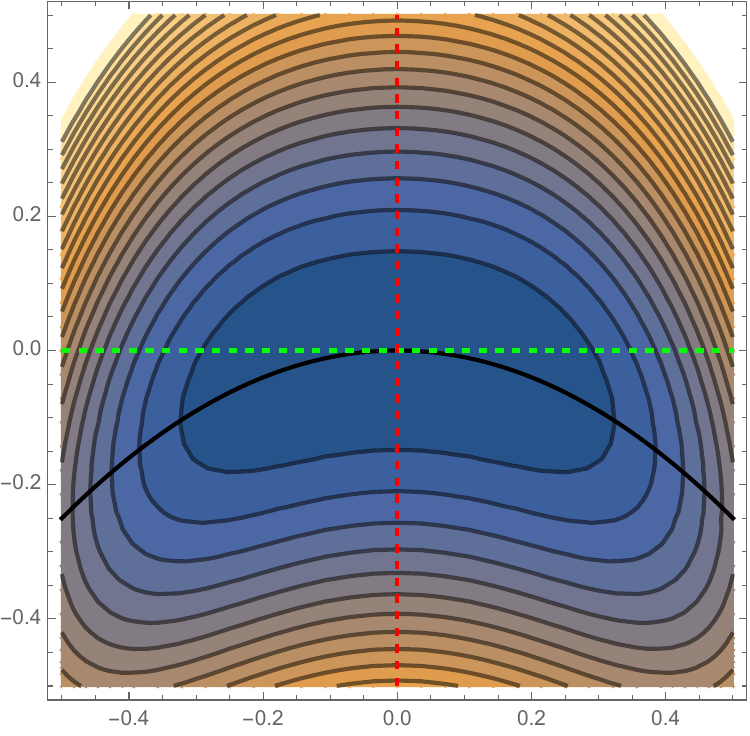}
\caption{The function $g(v,u)=\frac12(v+u^2)^2+u^4$ near the origin. \emph{Left:} surface plot. \emph{Right:} level sets. The solid black curve is the ravine $v=-u^2$; the dashed green line is $\mathsf Q=\{v=0\}$; the dashed red line is $\mathsf P=\{u=0\}$. Compare with \Cref{fig:ravine}: the ravine here has $O(\norm{u}^2)$ curvature, matching the nonvanishing cubic coupling $P\nabla^3 g(0)[e_u,e_u]=2e_v$.}
\label{fig:ravine-nonconvex}
\end{figure}

From the argument it is clear that one could replace convexity with the weaker hypothesis $P\nabla^3 f(0)[u,u]=0$ for all $u\in\mathsf Q$. We assume convexity because it is a more natural condition and because it additionally gives $\ip{\nabla f(x)}{x}\ge f(x)$, implying $\norm{x^+}\le\norm{x}$ for every iterate. This distance monotonicity trivially keeps all iterates near the minimizer---a further simplification.

\Cref{fig:intro-experiments} previews the practical performance of Algorithm~\ref{alg:main} on both functions $f$ and~$g$, compared with fixed-step gradient descent, pure Polyak, and the block-scheduled \texttt{GDPolyak} from~\cite{DDJ}. All hyperparameters---$(\eta,\tau)$ for Algorithm~\ref{alg:main} and the GD stepsize and block length for \texttt{GDPolyak}---were tuned by grid search. On both problems, Algorithm~\ref{alg:main} and \texttt{GDPolyak} converge to distance $10^{-6}$ in roughly $66$--$84$ iterations, while GD and Polyak plateau orders of magnitude above the target. The convex and nonconvex quartics behave nearly identically: the same tuned stepsize $\eta=1$ works for both. The tuned \texttt{GDPolyak} uses block length~$1$---alternating a single GD step with a single Polyak step---and both converging methods require $\eta\approx 1$.

\begin{figure}[ht]
\centering
\includegraphics[width=\textwidth]{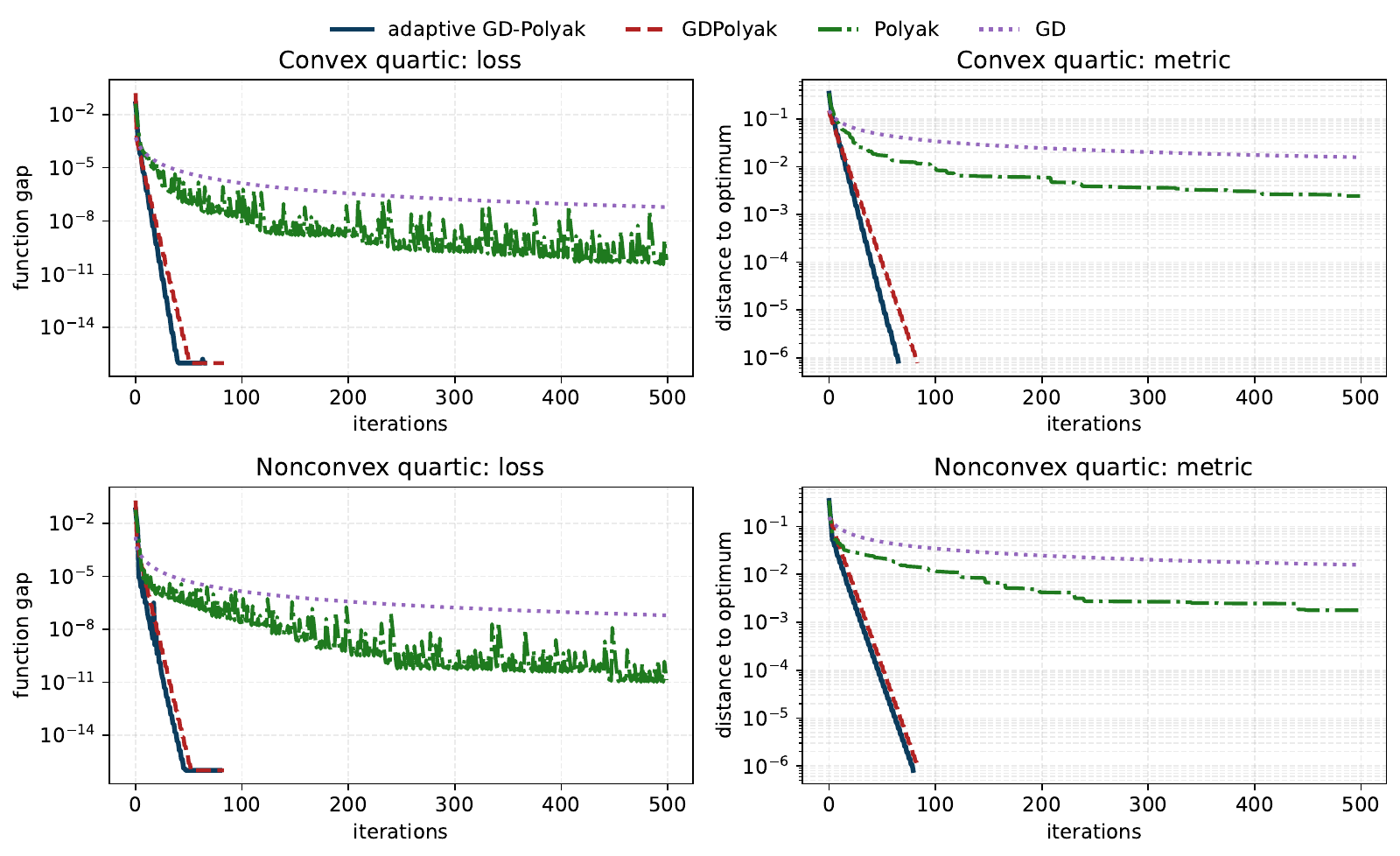}
\caption{Algorithm~\ref{alg:main} on the two quartics from \Cref{fig:ravine,fig:ravine-nonconvex}. Top: convex $f$ with $(\eta,\tau)=(1,0.15)$; $66$ iterations. Bottom: nonconvex $g$ with $(\eta,\tau)=(1,0.12)$; $80$ iterations. \texttt{GDPolyak} (stepsize~$1$, block length~$1$, i.e., alternating GD/Polyak every iteration): $84$ iterations on both. GD stepsize~$1$. Distance threshold~$10^{-6}$.}
\label{fig:intro-experiments}
\end{figure}

\paragraph{Related work.}
Classical results establish that gradient descent converges linearly for smooth functions satisfying quadratic growth~\cite{Polyak63,LuoTseng93,DrusvyatskiyLewis18,KarimiNutiniSchmidt16,NecNesGli19}. The Polyak stepsize $\eta_k=\bigl(f(x_k)-f_\star\bigr)/\norm{\nabla f(x_k)}^2$ was introduced in~\cite{Polyak87} and has been revisited recently in~\cite{HazanKakade19}. When growth is weaker than quadratic, constant-stepsize gradient descent converges only sublinearly~\cite{AttouchBolteSvaiter13}.

The geometric idea of exploiting a manifold of slow growth dates to the \emph{ravine method} of Gelfand and Tsetlin~\cite{GelfandTsetlin61}, which later influenced Polyak's heavy ball method~\cite{Polyak64} and Nesterov's accelerated gradient~\cite{Nesterov83}; see~\cite{AttouchFadili22} for a modern dynamical-systems perspective. In nonsmooth optimization, analogous structures appear as \emph{active manifolds} and \emph{partial smoothness}~\cite{Lewis02,DrusvyatskiyLewis14}, and the Normal Tangent Descent algorithm of~\cite{DavisJiang24} exploits such structure to obtain nearly-linear convergence for nonsmooth functions with quadratic growth.

A separate line of work shows that nonconstant stepsize schedules can accelerate gradient descent even for smooth strongly convex functions. Young~\cite{Young53} used Chebyshev-polynomial stepsizes for quadratics; more recent results include the ``big-step-little-step'' schedule of~\cite{KelnerEtAl22} for multiscale objectives, long steps for smooth convex functions~\cite{Grimmer24}, and the silver stepsize schedule~\cite{AltschulerParrilo23a,AltschulerParrilo23b}. Our algorithm's switching between constant gradient steps and a single Polyak step is closer in spirit to the epoch-based schedule in~\cite{DDJ}.

Key applications of fourth-order growth arise in rank-overparameterized matrix sensing and factorization~\cite{BurerMonteiro03,BurerMonteiro05,ChiLuChen19,ZhuoEtAl21} and overparameterized neural network training~\cite{XuDu23}. In the exact-rank regime, the objective grows quadratically and standard methods converge linearly~\cite{TuEtAl16,GeJinZheng17,ZhuEtAl18}. With rank overparameterization, growth degrades to quartic, and constant-stepsize gradient descent becomes sublinear~\cite{ZhuoEtAl21}; the GDPolyak algorithm of~\cite{DDJ} was the first method to achieve nearly-linear convergence in this setting.

\section{Setup, algorithm, and main theorem}

After subtracting the minimal value and translating the minimizer to the origin, we impose the following assumption.

\begin{assumption}[Fourth-order growth]\label{ass:main}
Let $f:\R^d\to\R$ be $C^4$ in a neighborhood of the origin with $f(0)=0$ and $\nabla f(0)=0$. Assume:
\begin{enumerate}[label=\textnormal{(A\arabic*)}, leftmargin=2.3em]
    \item \label{ass:quartic} There exist constants $m_0>0$ and $\rho_0>0$ such that
    \[
    f(x)\ge m_0\norm{x}^4
    \qquad\text{for all } \norm{x}\le \rho_0.
    \]
    \item \label{ass:singular} The Hessian $H:=\nabla^2 f(0)$ is singular and nonzero.
\end{enumerate}
\end{assumption}

Because the origin is a minimizer, $H$ is symmetric positive semidefinite. Set
\[
\mathsf P:=\range(H),\qquad \mathsf Q:=\Null(H),
\]
and let $P$ and $Q$ denote the orthogonal projections onto $\mathsf P$ and $\mathsf Q$, respectively. For $x\in\R^d$, write
\[
u:=Qx,\qquad v:=Px,
\qquad\text{so that } x=u+v,\quad u\perp v.
\]
Throughout, asymptotic notation ($O$, $\Theta$, $\Omega$) refers to the regime $\norm{x}\to0$. Define the ratio
\[
R(x):=\frac{f(x)}{\norm{\nabla f(x)}^{4/3}}.
\]

\begin{algorithm}[ht]
\caption{Adaptive GD--Polyak}\label{alg:main}
\begin{algorithmic}[1]
\Require initial point $x_0$, stepsize $\eta>0$, trigger threshold $\tau>0$
\For{$k=0,1,2,\dots$}
    \If{$\nabla f(x_k)=0$}
        \State \textbf{stop}
    \EndIf
    \If{$R(x_k)\ge \tau$}
        \State $x_{k+1}\gets x_k-\dfrac{f(x_k)}{\norm{\nabla f(x_k)}^2}\nabla f(x_k)$ \Comment{Polyak step}
    \Else
        \State $x_{k+1}\gets x_k-\eta\,\nabla f(x_k)$ \Comment{gradient step}
    \EndIf
\EndFor
\end{algorithmic}
\end{algorithm}

The following is the main theorem of the paper.
\begin{theorem}[Local convergence]\label{thm:main}
Suppose that Assumption~\ref{ass:main} holds and that $f$ is convex on a neighborhood of the origin. Then there exist constants $\rho,\eta_0,\tau_\star>0$ such that for every $\eta\in(0,\eta_0]$ and $x_0\in\ball{\rho}{0}$, Algorithm~\ref{alg:main} with $\tau=\tau_\star$ produces iterates satisfying $\norm{x_k}\to 0$. More precisely, for every $\varepsilon\in(0,\norm{x_0}]$, the algorithm reaches $\ball{\varepsilon}{0}$ after at most
\[
\cO\!\left(\eta^{-1}\log\frac{1}{\varepsilon}\log\frac{\norm{x_0}}{\varepsilon}\right)
\]
gradient and function evaluations, where $\cO(\cdot)$ hides dependence on $m_0,\rho_0,\rho,\eta_0, \tau_\star$ but not the dimension $d$.
\end{theorem}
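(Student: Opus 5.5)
The proof splits into three ingredients: local Taylor estimates, a contraction for the Polyak step once the trigger fires, and a bound on how long the gradient phase can last before it fires.

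\textbf{Preliminaries.} We first collect the local estimates. By convexity, for every iterate $\ip{\nabla f(x)}{x}\ge f(x)$. Hence a Polyak step gives
\[
\norm{x^+}^2=\norm{x}^2-2\frac{f}{\norm{\nabla f}^2}\ip{\nabla f}{x}+\frac{f^2}{\norm{\nabla f}^2}\le \norm{x}^2-\frac{f^2}{\norm{\nabla f}^2}.
\]
A gradient step with $\eta\le 1/L$ also satisfies $\norm{x^+}\le\norm{x}$, by the standard cocoercivity estimate for convex $L$-smooth functions. So all iterates stay in $\ball{\rho}{0}$ and the Taylor estimates remain valid throughout.

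\textbf{Polyak step.} When $R(x)\ge\tau$, we have $f^2/\norm{\nabla f}^2\ge \tau^{3/2} f^{1/2}$. Using $f\ge m_0\norm{x}^4$, this gives
\[
\norm{x^+}^2\le\bigl(1-\tau^{3/2}\sqrt{m_0}\bigr)\norm{x}^2,
\]
which is a constant-factor contraction. It therefore suffices to show that the gradient phase triggers the Polyak step within $\cO(\eta^{-1}\log(1/\varepsilon))$ steps while $\norm{x}\ge\varepsilon$. Multiplying this by the $\cO(\log(\norm{x_0}/\varepsilon))$ Polyak steps needed for the distance to fall below $\varepsilon$ gives the stated bound.

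\textbf{Gradient phase.} Write $x=u+v$ with $u=Qx$ and $v=Px$. By \Cref{lem:vanishing}, convexity kills $P\nabla^3f(0)[u,u]$, so
\[
P\nabla f(x)=Hv+O\bigl(\norm{u}^3+\norm{u}\norm{v}+\norm{v}^2\bigr).
\]
Let $G(x)=\norm{P\nabla f(x)}^2$. A gradient step changes $P\nabla f$ by $-\eta P\nabla^2 f(x)\nabla f(x)+O(\eta^2\norm{\nabla f}^2)$. On the range, $P\nabla^2 f(x)$ acts like $H$ up to $O(\norm{x})$ errors, and $H\succeq\lambda_{\min}^+ I$ on $\mathsf P$. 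From this one derives
\[
G(x^+)\le(1-c\eta)G(x)+C\eta\norm{u}^6,
\]
where the $\norm{u}^6$ term comes from the cross term between $P\nabla f$ and the $Q$-component of the gradient, after controlling $Q\nabla^2 f(x)P$ and $Q\nabla f(x)=O(\norm{u}^3+\norm{u}\norm{v})$. Moreover, since $\norm{u}$ decreases by at most $\cO(\eta\norm{u}^3)$ per step and $\norm{x}$ is monotone, $\norm{u}$ is essentially frozen on the timescale $\cO(\eta^{-1}\log(1/\varepsilon))$. Iterating the recursion, after $\cO(\eta^{-1}\log(\norm{x_0}/\norm{u}))$ steps we get $G\le C'\norm{u}^6$, i.e.\ $\norm{v}=O(\norm{u}^3)$.

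\textbf{Fourth-power regime.} In this regime, Taylor expansion with $\norm{v}=O(\norm{u}^3)$ gives $f(x)=\tfrac1{24}\nabla^4 f(0)[u]^4+O(\norm{u}^5)$ plus a quadratic term $O(\norm{u}^6)$. Also $\norm{\nabla f(x)}\le C\norm{u}^3$. Hence
\[
R(x)\ge \frac{m_0\norm{u}^4}{C^{4/3}\norm{u}^4}=:2\tau_\star,
\]
so the trigger fires. This requires $\norm{u}\gtrsim\norm{x}$, which holds because $\norm{v}\ll\norm{u}$.

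\textbf{Main obstacle.} The hardest step is the case where $\norm{u}$ is small relative to $\norm{v}$ initially. There the $G$-contraction first drives $\norm{v}$ down geometrically, and we must argue that the phase still ends in $\cO(\eta^{-1}\log(1/\varepsilon))$ steps. Either $\norm{x}$ reaches $\varepsilon$, or $\norm{v}$ and $\norm{u}^3$ become comparable, which happens within $\log(\rho/\varepsilon^3)/\eta$ steps. A further concern is that the trigger might fire earlier, in the quadratic regime. This is harmless, since every Polyak step contracts $\norm{x}$ by the same factor regardless of where it fires. Finally, we fix $\eta_0$ small, $\tau_\star$ from the constant above, and $\rho$ small enough for all Taylor constants to apply.
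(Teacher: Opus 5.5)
Your architecture matches the paper's Steps~0--2: distance monotonicity from convexity, the Polyak contraction under the trigger, the recursion $G(x^+)\le(1-c\eta)G(x)+C\eta\norm{u}^6$, and the implication ``$G\le C'\norm{u}^6\Rightarrow R\ge 2\tau_\star$'' (the analogue of the paper's Claim). Counting per run of consecutive gradient steps instead of per distance-epoch is also a valid reduction. The step that fails is your reason for iterating the recursion, namely that $u$ is essentially frozen. We have $u_{k+1}-u_k=-\eta\,Q\nabla f(x_k)$, and $Q\nabla f(x)$ contains terms of order $\norm{u}\norm{v}$ (your own bound) and, in general, $\norm{v}^2$. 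So the $\mathsf Q$-component moves by a total amount of order $r^2$ while $v$ decays from size $r$. For example, $f(v,u)=\frac{v^2}{2(1-u)}+u^4$ is convex near $0$, satisfies $f\ge m_0\norm{x}^4$ locally, has $\nabla^2 f(0)=\operatorname{diag}(1,0)$, and has $\partial_u f\approx\tfrac12 v^2+4u^3$. Starting from $(v,u)=(r,0)$, the gradient steps push $u$ to roughly $-r^2/4$ (for small $\eta$) while $v$ decays. This drift is enormous relative to $\norm{u}$ exactly in the small-$\norm{u}$ regime you identify as the main obstacle. The target $C'\norm{u}^6$ therefore moves, and the assertion that $G\le C'\norm{u}^6$ after $O(\eta^{-1}\log(\norm{x_0}/\norm{u}))$ steps is not established.

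The repair needs no control of $u$ at all: use the recursion pointwise, through the contrapositive of your fourth-power bound, exactly as the paper does right after its Claim. Take $C'\ge 2C/c$ and let $\tau_\star$ be the matching constant. At any step where the trigger does not fire, $G(x_k)>C'\norm{u_k}^6$. Hence $C\eta\norm{u_k}^6\le\tfrac{c}{2}\eta\,G(x_k)$ and $G(x_{k+1})\le(1-\tfrac{c\eta}{2})G(x_k)$, however $u$ moves. The same inequality gives $\norm{u_k}^3\lesssim\sqrt{G(x_k)}$, and the expansion of $P\nabla f$ then gives $\norm{v_k}\lesssim\sqrt{G(x_k)}$, so $\norm{x_k}\lesssim G(x_k)^{1/6}$ at every non-trigger point. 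Because $G$ decays geometrically from $O(\rho^2)$ along a run of gradient steps, the run must end---with a Polyak step or with $\norm{x_k}<\varepsilon$---within $O(\eta^{-1}\log(\rho/\varepsilon^3))$ steps. That is exactly what your last paragraph asserts. With this fix your route is sound, and it is in fact leaner than the paper's Step~3. There, decay of $G$ is converted into decay of $\norm{x}$ through the crude drift bound $\norm{u_N-u_0}=O(r^2\log(1/r))$ and the case split $\norm{u_0}\lessgtr\theta r$; the bound $\norm{x}\lesssim G^{1/6}$ at non-trigger points makes both unnecessary.
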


\begin{remark}[Unknown optimal value]\label{rem:unknown-fstar}
Algorithm~\ref{alg:main} uses $f_\star$ in the Polyak step and the trigger $R$. In practice, the exact optimal value is rarely known, but a lower bound $\hat f_0\le f_\star$ is often available---for instance, $\hat f_0=0$ for any nonnegative loss. Following Algorithm~2 and Theorem~5.3 of~\cite{DDJ}, such a lower bound suffices: wrap Algorithm~\ref{alg:main} in an outer loop indexed by $j=1,\dots,J$, running the inner algorithm with $\hat f_j$ in place of $f_\star$ and the Polyak stepsize scaled by $1/2$, then updating $\hat f_{j+1}\gets (\hat f_j+\min_k f(x_k))/2$. Each outer iteration halves the gap $|f_\star-\hat f_j|$, and the Polyak contraction in Step~1 holds with modified constants whenever $\hat f_j\le f_\star$. Thus $\cO(\log(|f_\star-\hat f_0|/\varepsilon))$ outer iterations suffice, adding one logarithmic factor to the total complexity.
\end{remark}

\section{Local estimates}\label{sec:local}

Two lemmas supply all the Taylor information used in the proof. The first records vanishing of certain cubic terms; the second collects the gradient bounds.

\begin{lemma}[Vanishing cubics]\label{lem:vanishing}
For every $u\in\mathsf Q$ the following are true.
\begin{enumerate}[label=\textnormal{(\alph*)}, leftmargin=2.2em]
\item \label{item:flat-cubic} Equality $\nabla^3 f(0)[u,u,u]=0$ holds.
\item \label{item:convex-kills-b} If $f$ is convex near the origin, then $\nabla^3 f(0)[u,u]=0$. In particular, $P\nabla^3 f(0)[u,u]=0$.
\end{enumerate}
\end{lemma}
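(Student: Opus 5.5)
Both parts follow from restricting $f$ to lines through the origin and reading off Taylor coefficients.

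For part \ref{item:flat-cubic}, fix $u\in\mathsf Q$ and set $\phi(t):=f(tu)$. Since $f$ is $C^4$ near $0$, $f(0)=0$, $\nabla f(0)=0$ and $\nabla^2 f(0)[u,u]=\ip{Hu}{u}=0$, Taylor's theorem gives
\[
\phi(t)=\tfrac{t^3}{6}\nabla^3 f(0)[u,u,u]+O(t^4).
\]
The origin is a local minimizer, so $\phi(t)\ge 0$ for small $|t|$; Assumption~\ref{ass:main}\ref{ass:quartic} gives this directly. If the cubic coefficient were nonzero, then $\phi(t)$ would be negative for small $t$ of the appropriate sign, which is a contradiction. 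Hence $\nabla^3 f(0)[u,u,u]=0$.

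For part \ref{item:convex-kills-b}, I would use the fact that convexity makes $\nabla^2 f(x)\succeq 0$ for all $x$ near $0$. Fix $u\in\mathsf Q$ and an arbitrary $w\in\R^d$, and consider the quadratic form
\[
\psi(t):=\nabla^2 f(tu)[w+su,\,w+su]\ge 0,
\]
where $s$ is a free real parameter. Expanding $\nabla^2 f(tu)=H+t\nabla^3 f(0)[u]+O(t^2)$ and using $Hu=0$, we get
\[
\psi(t)=\ip{Hw}{w}+t\bigl(\nabla^3 f(0)[u,w,w]+2s\nabla^3 f(0)[u,u,w]+s^2\nabla^3 f(0)[u,u,u]\bigr)+O(t^2),
\]
where the $O(t^2)$ term depends on $s$ and $w$. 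The last cubic term vanishes by part \ref{item:flat-cubic}.

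The argument then splits on $w$. First take $w\in\mathsf Q$. Then $\ip{Hw}{w}=0$, so nonnegativity of $\psi(t)$ for small $t$ of both signs forces the linear coefficient to vanish:
\[
\nabla^3 f(0)[u,w,w]+2s\,\nabla^3 f(0)[u,u,w]=0\qquad\text{for every } s.
\]
Hence $\nabla^3 f(0)[u,u,w]=0$ for all $w\in\mathsf Q$.

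For $w\in\mathsf P$ the constant term $\ip{Hw}{w}$ is positive, so the same trick fails directly. This is the main obstacle, and I would handle it by balancing scales. Take $t=\pm\delta$, $s=\pm\delta^{-1/2}$, and $w$ fixed in $\mathsf P$. The constant term is $O(1)$, while the linear term contains $2ts\,\nabla^3 f(0)[u,u,w]=\pm 2\delta^{1/2}\nabla^3 f(0)[u,u,w]$. This balance also fails, so I would instead scale $w$ itself: replace $w$ by $\epsilon w$. Then
\[
\psi=\epsilon^2\ip{Hw}{w}+t\bigl(\epsilon^2 \nabla^3 f(0)[u,w,w]+2s\epsilon\,\nabla^3 f(0)[u,u,w]\bigr)+O(t^2(\epsilon+s)^2).
\]
Choose $s=1$, $t=\sigma\epsilon^{1/2}$ with the sign $\sigma$ opposite to that of $\nabla^3 f(0)[u,u,w]$, and let $\epsilon\to0$. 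The dominant term is then $2t\epsilon\,\nabla^3 f(0)[u,u,w]\sim-\epsilon^{3/2}\,|\nabla^3 f(0)[u,u,w]|$. This beats $\epsilon^2\ip{Hw}{w}$, and it beats the remainder $O(t^2)=O(\epsilon)$ only if that remainder is controlled more carefully. The cleaner route is the standard Schur-complement fact: $\nabla^2 f(tu)\succeq0$ and $\nabla^2 f(tu)u=O(t^2)$ (since its $t$-coefficient $\nabla^3 f(0)[u,u,\cdot]$ restricted to $\mathsf Q$ vanishes and the quadratic term is $O(t^2)$). For a PSD matrix $A$, $|\ip{Au}{w}|\le\sqrt{\ip{Au}{u}\ip{Aw}{w}}$. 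Here $\ip{A u}{u}=\nabla^2 f(tu)[u,u]=O(t^2)$ by the $\mathsf Q$ case together with part \ref{item:flat-cubic}, so
\[
|\ip{\nabla^2 f(tu)u}{w}|=O(|t|).
\]
That alone only gives $O(t)$, so I would refine. Use $\nabla^2 f(tu)[u,u]=\tfrac{t^2}{2}\nabla^4 f(0)[u,u,u,u]+o(t^2)$, which is $O(t^2)$, and $\ip{Aw}{w}=O(1)$, giving $|t\,\nabla^3 f(0)[u,u,w]+O(t^2)|\le O(|t|)$. This is still inconclusive, so the final plan is to apply Cauchy–Schwarz to the one-variable function $\phi(t)=f(tu)$ instead. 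Convexity gives $\nabla f(tu)$ monotone, and $\ip{\nabla f(tu)}{w}=\tfrac{t^2}{2}\nabla^3 f(0)[u,u,w]+O(t^3)$. The gradient inequality for convex functions, $f(y)\ge f(tu)+\ip{\nabla f(tu)}{y-tu}$ with $y=tu-\lambda w$, gives
\[
f(tu-\lambda w)\ge f(tu)-\lambda\tfrac{t^2}{2}\nabla^3 f(0)[u,u,w]+O(\lambda t^3).
\]
The left side, for $\lambda= t^2$, equals
\[
f(tu)-t^2\ip{\nabla f(tu)}{w}+\tfrac{t^4}{2}\ip{Hw}{w}+O(t^5).
\]
Comparing with $f\ge0$ at the nearby point $tu-\lambda w$, where $f(tu)=O(t^4)$, is where the sign information should close. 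I expect this part, showing the $\mathsf P$-component of $\nabla^3 f(0)[u,u]$ vanishes, to be the genuine difficulty. The mechanism is that a nonzero component $b:=P\nabla^3 f(0)[u,u]$ would let one move from $tu$ by $-\lambda H^{\dagger}b$ and decrease $f$ below $-c\,t^4$ while $f(tu)=O(t^4)$ with a matched constant. To close it I would pick $\lambda=\kappa t^2$ and optimize $\kappa$ against the fourth-order coefficient of $\phi$, using that $f(tu)\le \tfrac{t^4}{24}\nabla^4f(0)[u^{\otimes4}]+o(t^4)$.
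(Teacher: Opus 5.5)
Part (a) is fine and matches the paper, and your $\mathsf Q$-component computation in (b) is correct. The gap is the $\mathsf P$-component. The statement $P\nabla^3 f(0)[u,u]=0$ is the only part of (b) used later, and it is never proved: the proposal ends with a plan, and that plan cannot succeed.

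Every route you try only tests convexity or nonnegativity at base points on the line $\R u$ or within $O(t^2)$ of it. These routes are the rescaled form $\nabla^2 f(tu)[\epsilon w+su,\epsilon w+su]$, Cauchy--Schwarz for $A=\nabla^2 f(tu)$, the gradient inequality between $tu$ and $tu-\lambda w$, and nonnegativity of $f$ at $tu-\kappa t^2 w$. That information is consistent with $b:=\nabla^3 f(0)[u,u,w]\neq 0$ and yields at best a discriminant-type bound $b^2\lesssim \ip{Hw}{w}\,\nabla^4 f(0)[u,u,u,u]$.

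The paper's nonconvex example $g(v,u)=\tfrac12(v+u^2)^2+u^4$ shows this concretely:
\begin{itemize}
\item $g\ge 0$;
\item its Hessian at $(0,t)$ has entries $g_{vv}=1$, $g_{uv}=2t$, $g_{uu}=18t^2$ and determinant $14t^2\ge0$, so it is positive semidefinite along the whole null-space axis;
\item $g(-\kappa t^2,t)=t^4\bigl(\tfrac12(1-\kappa)^2+1\bigr)>0$ for every $\kappa$;
\item yet $P\nabla^3 g(0)[e_u,e_u]=2e_v\neq 0$.
\end{itemize}
So the ``mechanism'' you describe, where a nonzero $b$ lets $f$ dip below $-ct^4$ near $tu$, is false. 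Your claim $\nabla^2 f(tu)u=O(t^2)$ is also circular. The $t$-coefficient of $\nabla^2 f(tu)u$ is the full vector $\nabla^3 f(0)[u,u]$, and its $\mathsf P$-part is exactly what you are trying to show vanishes.

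The missing idea is to swap the roles of the two directions. The vector on which the Hessian is tested must lie in $\mathsf Q$, so that the form vanishes at $t=0$. The direction in which the base point moves, however, can be arbitrary.

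Set $\psi(t):=\nabla^2 f(tw)[u,u]$ for any $w\in\R^d$. Convexity gives $\psi(t)\ge 0$ for small $|t|$, and $\psi(0)=\ip{Hu}{u}=0$. So $t=0$ is an interior minimizer and $\psi'(0)=\nabla^3 f(0)[w,u,u]=0$. By symmetry this says $\ip{\nabla^3 f(0)[u,u]}{w}=0$ for all $w$. That covers the $\mathsf P$- and $\mathsf Q$-components at once and does not even need part (a). This is exactly the paper's proof.

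It is also precisely where $g$ fails to be convex: $g_{uu}(v,0)=2v<0$ for $v<0$. Convexity is violated at base points displaced along $\mathsf P$, which none of your arguments examine.
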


\begin{proof}
\ref{item:flat-cubic}.
Since $Hu=0$ and $f(0)=0$, the expansion $f(tu)=\frac{t^3}{6}\nabla^3 f(0)[u,u,u]+\cO(t^4)$ and Assumption~\ref{ass:quartic} give $f(tu)\ge m_0 t^4\norm{u}^4$. A nonzero cubic coefficient would 
force $f(tu)$ to be negative either for small positive or small negative values of $t$, thereby  contradicting $f\ge 0$.

\ref{item:convex-kills-b}.
Fix $w\in\R^d$. Define $\psi(t):=\ip{\nabla^2 f(tw)\,u}{u}$. Convexity near the origin gives $\nabla^2 f(tw)\succeq 0$ for small $t$, so $\psi(t)\ge 0$. Since $\psi(0)=\ip{Hu}{u}=0$, the point $t=0$ is a local minimizer of $\psi$, hence $\psi'(0)=\nabla^3 f(0)[w,u,u]=0$. Since $w$ is arbitrary, $\nabla^3 f(0)[u,u]=0$.
\end{proof}

\begin{lemma}[Gradient estimates]\label{lem:taylor}
For $\norm{x}$ sufficiently small, writing $x=u+v$ we have:
\begin{enumerate}[label=\textnormal{(\alph*)}, leftmargin=2.2em]
\item \label{item:Qg-diff} $Q\nabla f(x)=Q\nabla f(u)+O(\norm{u}\norm{v}+\norm{v}^2)$.
\item \label{item:flat-g} $\norm{Q\nabla f(u)}=\Theta(\norm{u}^3)$.
\end{enumerate}
If in addition $P\nabla^3 f(0)[u,u]=0$ for all $u\in\mathsf Q$ (which holds when $f$ is convex near the origin by \Cref{lem:vanishing}\ref{item:convex-kills-b}):
\begin{enumerate}[label=\textnormal{(\alph*)}, leftmargin=2.2em, start=3]
\item \label{item:basic-Pg} $P\nabla f(x)=Hv+O(\norm{u}^3+\norm{u}\norm{v}+\norm{v}^2)$.
\end{enumerate}
\end{lemma}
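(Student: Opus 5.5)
All three estimates follow from Taylor expansions of $\nabla f$ around suitable base points, using that $f\in C^4$ near the origin. Throughout, $\norm{x}\le r$ for a small $r$ on which $\nabla^2 f$ and $\nabla^3 f$ are bounded, and $\norm{u},\norm{v}\le\norm{x}$.

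For \ref{item:Qg-diff}, we expand $\nabla f$ around $u$ in the direction $v$:
\[
\nabla f(x)=\nabla f(u)+\nabla^2 f(u)v+O(\norm{v}^2).
\]
The Hessian satisfies $\nabla^2 f(u)=H+\nabla^3 f(0)[u]+O(\norm{u}^2)$. Applying $Q$ and using $QH=0$ gives
\[
Q\nabla^2 f(u)v=O(\norm{u}\norm{v}),
\]
which is exactly the claim.

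For \ref{item:basic-Pg}, we instead expand around the origin to second order:
\[
\nabla f(x)=Hx+\tfrac12\nabla^3 f(0)[x,x]+O(\norm{x}^3).
\]
Since $Hx=Hv\in\mathsf P$, this gives $P Hx=Hv$. We then split the quadratic term as
\[
\nabla^3 f(0)[x,x]=\nabla^3 f(0)[u,u]+2\nabla^3 f(0)[u,v]+\nabla^3 f(0)[v,v].
\]
The first piece is killed by $P$ under the hypothesis, and the other two are $O(\norm{u}\norm{v}+\norm{v}^2)$. For the cubic remainder, $\norm{x}^3\lesssim\norm{u}^3+\norm{v}^3$, and $\norm{v}^3\le\norm{v}^2$ for small $x$, so it is absorbed as well.

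For \ref{item:flat-g}, the upper bound comes from expanding $\nabla f(u)$ to third order:
\[
\nabla f(u)=Hu+\tfrac12\nabla^3 f(0)[u,u]+O(\norm{u}^3).
\]
Here $Hu=0$, and $Q\nabla^3 f(0)[u,u]=0$ because $\ip{\nabla^3 f(0)[u,u]}{w}=0$ for all $w\in\mathsf Q$. To see this, apply Lemma~\ref{lem:vanishing}\ref{item:flat-cubic} to $u+sw\in\mathsf Q$ and differentiate the resulting cubic polynomial in $s$ at $s=0$. This polarization argument does not need convexity. Hence $\norm{Q\nabla f(u)}=O(\norm{u}^3)$.

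The lower bound is the main obstacle. I would use quartic growth along $\mathsf Q$ together with the fundamental theorem of calculus:
\[
f(u)=\int_0^1\ip{\nabla f(tu)}{u}\,dt=\int_0^1\ip{Q\nabla f(tu)}{u}\,dt,
\]
where the second equality holds because $u\in\mathsf Q$. To exploit this I would use the exact homogeneous structure. Since the second- and third-order terms vanish on $\mathsf Q$, set $c(u):=\tfrac1{24}\nabla^4 f(0)[u,u,u,u]$. Then
\[
f(u)=c(u)+O(\norm{u}^5)\quad\text{and}\quad Q\nabla f(u)=\nabla_{\mathsf Q}c(u)+O(\norm{u}^4).
\]
This step needs the fifth-order remainder, or an $o(\norm{u}^4)$ remainder via continuity of $\nabla^4 f$, which suffices. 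Quartic growth forces $c(u)\ge m_0\norm{u}^4$ on $\mathsf Q$. Euler's identity $\ip{\nabla c(u)}{u}=4c(u)$ then gives
\[
\norm{\nabla_{\mathsf Q}c(u)}\,\norm{u}\ge 4m_0\norm{u}^4,
\]
so $\norm{Q\nabla f(u)}\ge 4m_0\norm{u}^3-o(\norm{u}^3)=\Omega(\norm{u}^3)$.
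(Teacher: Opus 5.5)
Your proposal is correct and follows essentially the same route as the paper. For (a) you use a first-order expansion in $v$ and let $QH=0$ kill the leading Hessian term; the paper writes the same estimate in integral form. For (b) you polarize $\nabla^3 f(0)[u,u,u]=0$ for the upper bound and apply Euler's identity with Cauchy--Schwarz to the quartic form for the lower bound, and for (c) you expand to second order at the origin and use the hypothesis to remove the pure $u$-$u$ term. One small slip: under $C^4$ the remainder in $Q\nabla f(u)=Q\nabla c(u)+\cdots$ is $o(\norm{u}^3)$, not $o(\norm{u}^4)$ (the latter is the remainder for $f(u)$). Your final line already uses $o(\norm{u}^3)$, so the argument is unaffected.
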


\begin{proof}
\ref{item:Qg-diff}.
Write $Q\nabla f(u+v)-Q\nabla f(u)=\int_0^1 Q\nabla^2 f(u+tv)\,v\,dt$. Since $QHv=0$ and $\|\nabla^2 f(z)-H\|_{\rm op}=O(\norm{z})$, the integrand is $O((\norm{u}+\norm{v})\norm{v})$ uniformly in $t$.

\ref{item:flat-g}.
By Lemma~\ref{lem:vanishing}\ref{item:flat-cubic}, the Taylor expansion of $f$ on $\mathsf Q$ starts at order four: $f(u)=p(u)+o(\norm{u}^4)$ where $p(u):=\frac{1}{24}\nabla^4 f(0)[u,u,u,u]$. Assumption~\ref{ass:quartic} gives $p(u)\ge \frac{m_0}{2}\norm{u}^4$ for small $\norm{u}$. Differentiating yields $$Q\nabla f(u)=Q\nabla p(u)+o(\norm{u}^3).$$ By Euler's identity, $\ip{Q\nabla p(u)}{u}=4p(u)\ge 2m_0\norm{u}^4$, so Cauchy--Schwarz gives $\norm{Q\nabla p(u)}\ge 2m_0\norm{u}^3$. For the upper bound, note that Lemma~\ref{lem:vanishing}\ref{item:flat-cubic} gives $\nabla^3 f(0)[u,u,u]=0$ for every $u\in\mathsf Q$. Since $\nabla^3 f(0)$ is symmetric and trilinear, replacing $u$ by $u+sw$ with $w\in\mathsf Q$ and extracting the coefficient of $s$ yields $\nabla^3 f(0)[w,u,u]=0$ for all $u,w\in\mathsf Q$. In particular, $\ip{Q\nabla^3 f(0)[u,u]}{w}=\nabla^3 f(0)[w,u,u]=0$ for every $w\in\mathsf Q$, so $Q\nabla^3 f(0)[u,u]=0$. The Taylor expansion of $Q\nabla f(u)$ therefore becomes $$Q\nabla f(u)=\underbrace{Q\nabla^2 f(0)[u]}_{=0}+\underbrace{Q\nabla^3 f(0)[u,u]}_{=0}+\frac16 Q\nabla^4 f(0)[u,u,u]+o(\norm{u}^3)=O(\norm{u}^3),$$
thereby completing the proof.

\ref{item:basic-Pg}.
The gradient expansion $\nabla f(x)=Hx+\frac12\nabla^3 f(0)[x,x]+\cO(\norm{x}^3)$ and $PH=H$ give $P\nabla f(x)-Hv=\frac12 P\nabla^3 f(0)[u+v,u+v]+\cO(\norm{x}^3)$. By Lemma~\ref{lem:vanishing}\ref{item:convex-kills-b}, the pure $u^2$ term vanishes; the remaining terms give the bound.
\end{proof}

\section{Proof of the main theorem}\label{sec:proof}

\begin{proof}[Proof of \Cref{thm:main}]
Set $\mu:=\lambda_{\min}(H|_{\mathsf P})$ and let $L$ denote a Lipschitz constant for $\nabla f$ near the origin. The proof has five steps.

\smallskip
\noindent\emph{Step 0: non-increase of distance.}
Since $0$ minimizes convex function $f$, we have $\ip{\nabla f(x)}{x}\ge f(x)\ge 0$ for all $x$ near $0$. Co-coercivity of $L$-smooth convex gradients gives $\ip{\nabla f(x)}{x}\ge \frac{1}{L}\norm{\nabla f(x)}^2$. For a gradient step $x^+:=x-\eta\nabla f(x)$ with $\eta\le 1/L$, we therefore have
\[
\norm{x^+}^2
=\norm{x}^2-2\eta\ip{\nabla f(x)}{x}+\eta^2\norm{\nabla f(x)}^2
\le \norm{x}^2-\eta\bigl(\tfrac{2}{L}-\eta\bigr)\norm{\nabla f(x)}^2
\le \norm{x}^2.
\]
For a Polyak step, the aiming inequality $\ip{\nabla f(x)}{x}\ge f(x)$ gives $\norm{x^+}^2\le \norm{x}^2-f(x)^2/\norm{\nabla f(x)}^2\le \norm{x}^2$. Hence every iterate of Algorithm~\ref{alg:main} satisfies $\norm{y}\le \norm{x_0}$, and all iterates remain in a neighborhood where \Cref{lem:taylor} applies. Choose $\eta_0\le \min\{1/L,\,1/\norm{H}\}$.

\smallskip
\noindent\emph{Step 1: Polyak contraction when $R(x)\ge\tau$.}
For any fixed $\tau>0$ with $\tau^{3/2}\sqrt{m_0}<1$, set $c_\star:=\tau^{3/2}\sqrt{m_0}$ and $q_P:=\sqrt{1-c_\star}\in(0,1)$. For any $x$ with $R(x)\ge\tau$, recall that the Polyak step satisfies
\[
\norm{x^+}^2
\le \norm{x}^2-\frac{f(x)^2}{\norm{\nabla f(x)}^2}.
\]
From the assumption $R(x)=f(x)/\norm{\nabla f(x)}^{4/3}\ge\tau$ we have $\norm{\nabla f(x)}^2\le (f(x)/\tau)^{3/2}$, hence
\[
\frac{f(x)^2}{\norm{\nabla f(x)}^2}
\ge \frac{f(x)^2}{(f(x)/\tau)^{3/2}}
=f(x)^{1/2}\tau^{3/2}.
\]
Assumption~\ref{ass:quartic} gives $f(x)\ge m_0\norm{x}^4$, so $f(x)^{1/2}\ge \sqrt{m_0}\,\norm{x}^2$ and therefore $f(x)^2/\norm{\nabla f(x)}^2\ge \tau^{3/2}\sqrt{m_0}\,\norm{x}^2$. We conclude the contraction $\norm{x^+}\le q_P\norm{x}$.

\smallskip
\noindent\emph{Step 2: projected-gradient contraction when $R(x)<\tau$.}
Set $G(x):=\norm{P\nabla f(x)}^2$. Since $f\in C^4$, the map $G$ is $C^2$ near the origin, so $\nabla G$ is locally Lipschitz with some constant $L_G$ on $\ball{\rho}{0}$. The descent lemma for $G$ gives
\[
G(x^+)\le G(x)-\eta\ip{\nabla G(x)}{\nabla f(x)}+\tfrac{L_G}{2}\eta^2\norm{\nabla f(x)}^2.
\]
Write $g:=\nabla f(x)=Pg+Qg$. Since $\nabla G(x)=2\nabla^2 f(x)\,Pg$ and $\|\nabla^2 f(x)-H\|_{\rm op}=O(\norm{x})$, we deduce
\[
\ip{\nabla G(x)}{g}
=2\ip{\nabla^2 f(x)\,Pg}{Pg}+2\ip{\nabla^2 f(x)\,Pg}{Qg}
\ge \mu\norm{Pg}^2-C_1\norm{Pg}\norm{Qg},
\]
where the first term uses $\ip{Hw}{w}\ge\mu\norm{w}^2$ for $w\in\mathsf P$ (absorbing the $O(\norm{x})$ error for $\rho$ small) and $C_1:=2\sup_{\ball{\rho}{0}}\norm{\nabla^2 f}$. By Young's inequality, $C_1\norm{Pg}\norm{Qg}\le \frac{\mu}{4}\norm{Pg}^2+\frac{C_1^2}{\mu}\norm{Qg}^2$, so
\[
\ip{\nabla G(x)}{g}\ge \tfrac{3\mu}{4}\norm{Pg}^2-\tfrac{C_1^2}{\mu}\norm{Qg}^2.
\]
Substituting back and using $\norm{g}^2=\norm{Pg}^2+\norm{Qg}^2$ yields:
\begin{equation}\label{eq:G-contract}
G(x^+)\le (1-c_G\eta)\,G(x)+C_G\eta\,\norm{Q\nabla f(x)}^2
\end{equation}
with $c_G:=\mu/2$ and $C_G:=2C_1^2/\mu$, valid for $\eta\le \mu/(2L_G)$.

Now choose $\delta>0$ so that $C_G\delta^{-2}\le c_G/2$. We now show the following claim.

\smallskip
\noindent\emph{Claim:} There exists $\kappa>0$ such that the condition $\norm{P\nabla f(x)}\le\delta\norm{Q\nabla f(x)}$ implies $R(x)\ge\kappa$, for all $\norm{x}$ sufficiently small.

\smallskip
\noindent\emph{Proof of Claim.}
Recall that Lemma~\ref{lem:taylor}\ref{item:basic-Pg} shows $P\nabla f(x)=Hv+O(\norm{u}^3+\norm{u}\norm{v}+\norm{v}^2)$, so
\[
\norm{Hv}\le \norm{P\nabla f(x)}+O(\norm{u}^3+\norm{u}\norm{v}+\norm{v}^2).
\]
Since we have $\norm{Hv}\ge\mu\norm{v}$ for $v\in\mathsf P$ and $\norm{P\nabla f(x)}\le\delta\norm{Q\nabla f(x)}=O(\norm{u}^3+\norm{u}\norm{v}+\norm{v}^2)$ (Lemma~\ref{lem:taylor}\ref{item:Qg-diff},\ref{item:flat-g}), we obtain $\mu\norm{v}\le O(\norm{u}^3+\norm{u}\norm{v}+\norm{v}^2)$. For $\norm{x}$ small enough the terms $\norm{u}\norm{v}$ and $\norm{v}^2$ can be absorbed into the left side, yielding $\norm{v}=O(\norm{u}^3)$.

In this regime, Lemma~\ref{lem:taylor}\ref{item:Qg-diff},\ref{item:flat-g} give
\[
\norm{\nabla f(x)}
\le \norm{Q\nabla f(x)}+\norm{P\nabla f(x)}
\le (1+\delta)\norm{Q\nabla f(x)}
=O(\norm{u}^3),
\]
while Assumption~\ref{ass:quartic} gives $f(x)\ge m_0\norm{x}^4\ge m_0\norm{u}^4$. Therefore, we conclude
\[
R(x)=\frac{f(x)}{\norm{\nabla f(x)}^{4/3}}
\ge \frac{m_0\norm{u}^4}{\Omega(\norm{u}^4)}
=\Omega(1).
\]
Denoting this positive lower bound by $\kappa$, completes the proof of the claim. \qed

\smallskip
Set $\tau_\star:=\min\{\kappa/2,\,(4m_0)^{-1/3}\}$, so that $\tau_\star^{3/2}\sqrt{m_0}\le 1/2<1$; this determines the trigger threshold in Algorithm~\ref{alg:main}. By contrapositive of the claim, $R(x)<\tau_\star\le\kappa$ implies $\norm{P\nabla f(x)}>\delta\norm{Q\nabla f(x)}$. Substituting into \eqref{eq:G-contract} and using $C_G\delta^{-2}\le c_G/2$ yields the estimate
\begin{equation}\label{eq:G-contract-clean}
G(x^+)\le (1-c_\sharp\eta)\,G(x),
\qquad c_\sharp:=c_G/2.
\end{equation}

\smallskip
\noindent\emph{Step 3: radius contraction within a virtual block.}
By Step~0, we know that $\norm{x_k}$ is non-increasing. We now show: starting from any $x_0$ with $r:=\norm{x_0}\le\rho$, within $N=O(\eta^{-1}\log(1/r))$ steps some iterate satisfies $\norm{x_k}\le q\,r$ for a fixed $q\in(0,1)$.

If a Polyak step fires at any step $k<N$, Step~1 gives $\norm{x_{k+1}}\le q_P\,r$, and the monotonicity from Step~0 keeps $\norm{x_m}\le q_P\,r$ for all $m>k$.

Otherwise, $R(x_k)<\tau_\star$ for $k=0,\dots,N-1$, so all $N$ steps are gradient steps. Iterating \eqref{eq:G-contract-clean}:
\begin{equation}\label{eq:endpoint-Pg}
\norm{P\nabla f(x_N)}\le e^{-c_\sharp\eta N/2}\,\norm{P\nabla f(x_0)}=O(e^{-c_\sharp\eta N/2}\,r).
\end{equation}
Choosing the implicit constant in $N=O(\eta^{-1}\log(1/r))$ strictly larger than $4/c_\sharp$ gives $\norm{P\nabla f(x_N)}\le Cr^{3+\alpha}$ for some $\alpha>0$ and $C>0$, so $\norm{P\nabla f(x_N)}\le \sigma r^3$ for any prescribed $\sigma>0$ once $\rho$ is small enough.

From Lemma~\ref{lem:taylor}\ref{item:basic-Pg} we have the estimate $Hv_N=P\nabla f(x_N)+O(\norm{u_N}^3+\norm{u_N}\norm{v_N}+\norm{v_N}^2)$. Since $\norm{P\nabla f(x_N)}=O(r^3)$ and $\norm{x_N}\le r$, absorbing lower-order terms gives $\norm{v_N}=O(r^3)$.

We now bound the drift of the $\mathsf Q$-component over the $N$ gradient steps. Since each step is $x_{k+1}=x_k-\eta\nabla f(x_k)$, projecting onto $\mathsf Q$ gives $u_{k+1}=u_k-\eta\,Q\nabla f(x_k)$, so $\norm{u_{k+1}-u_k}=\eta\norm{Q\nabla f(x_k)}$. Next to  bound $\norm{Q\nabla f(x_k)}$, observe by Lemma~\ref{lem:taylor}\ref{item:Qg-diff}, we have $\norm{Q\nabla f(x_k)}\le\norm{Q\nabla f(u_k)}+O(\norm{u_k}\norm{v_k}+\norm{v_k}^2)$, and by Lemma~\ref{lem:taylor}\ref{item:flat-g}, we have $\norm{Q\nabla f(u_k)}=O(\norm{u_k}^3)$. Since $\norm{x_k}\le r$ (Step~0), both $\norm{u_k}\le r$ and $\norm{v_k}\le r$, so $\norm{Q\nabla f(x_k)}=O(r^3+r^2)=O(r^2)$. Summing the per-step drift over $N=O(\eta^{-1}\log(1/r))$ steps yields:
\[
\norm{u_N-u_0}\le \sum_{k=0}^{N-1}\norm{u_{k+1}-u_k}=\eta\sum_{k=0}^{N-1}\norm{Q\nabla f(x_k)}=O(\eta N r^2)=O(r^2\log(1/r))=o(r).
\]
\noindent Next, fix a small value $\theta>0$  and set $q_S:=3\theta$. We consider two cases.

\paragraph{Case 1: $\norm{u_0}\le \theta r$.}
Then $\norm{u_N}\le 2\theta r$ for $\rho$ small. Recalling $\norm{v_N}=O(r^3)$, we deduce $\norm{x_N}\le q_S\,r$.

\paragraph{Case 2: $\norm{u_0}>\theta r$.}
Since $\norm{u_0}>\theta r$ and the total drift satisfies $\norm{u_N-u_0}=o(r)$, for $\rho$ small enough we have $\norm{u_N}\ge\theta r/2$.

We now show that $\norm{Q\nabla f(x_N)}=\Omega(r^3)$. To this end, by Lemma~\ref{lem:taylor}\ref{item:Qg-diff}, we have
\[
\norm{Q\nabla f(x_N)}\ge \norm{Q\nabla f(u_N)}-O(\norm{u_N}\norm{v_N}+\norm{v_N}^2).
\]
Since $\norm{v_N}=O(r^3)$, the error term is $O(r\cdot r^3+r^6)=O(r^4)$. By Lemma~\ref{lem:taylor}\ref{item:flat-g}, we have $\norm{Q\nabla f(u_N)}=\Theta(\norm{u_N}^3)\ge c\,(\theta r/2)^3=\Omega(r^3)$ for some constant $c>0$. For $\rho$ small, the $O(r^4)$ error is negligible compared to the $\Omega(r^3)$ leading term, so $\norm{Q\nabla f(x_N)}=\Omega(r^3)$, as claimed.

Now recall from \eqref{eq:endpoint-Pg} that we can make $\norm{P\nabla f(x_N)}\le \sigma r^3$ for any prescribed $\sigma>0$ by enlarging the constant in $N$. Choose $\sigma$ small enough that $\sigma r^3\le \delta\norm{Q\nabla f(x_N)}$, which is possible since $\norm{Q\nabla f(x_N)}=\Omega(r^3)$. Then $\norm{P\nabla f(x_N)}\le\delta\norm{Q\nabla f(x_N)}$, so the Claim from Step~2 gives $R(x_N)\ge\kappa\ge\tau_\star$. Therefore the next step is a Polyak step, and Step~1 yields $\norm{x_{N+1}}\le q_P\,r$.

\medskip
Set $q:=\max\{q_S,q_P\}\in(0,1)$. In both cases, $\norm{x_k}\le q\,r$ for some $k\le N+1$, and monotonicity ensures $\norm{x_m}\le q\,r$ for all $m\ge k$.

\smallskip
\noindent\emph{Step 4: complexity.}
Define virtual epochs for the analysis: epoch~$j$ starts at the first index $n_j$ with $\norm{x_{n_j}}\le r_j:=q^j\norm{x_0}$. If $\norm{x_{n_j}}\le q\,r_j=r_{j+1}$, then $n_{j+1}=n_j$ and epoch~$j$ uses zero steps. Otherwise $q\,r_j<\norm{x_{n_j}}\le r_j$, and Step~3 applied with start radius $\norm{x_{n_j}}$ ends the epoch within $O(\eta^{-1}\log(1/\norm{x_{n_j}}))=O(\eta^{-1}\log(1/r_j))$ steps (since $\norm{x_{n_j}}\ge q\,r_j$). To reach $\ball{\varepsilon}{0}$ requires $J=O(\log(\norm{x_0}/\varepsilon))$ epochs. The total step count is
\[
\sum_{j=0}^{J-1}N_j
=O\!\left(\eta^{-1}\sum_{j=0}^{J-1}\log\frac{1}{r_j}\right)
=O\!\left(\eta^{-1}\sum_{j=0}^{J-1}\bigl(j+\log\tfrac{1}{\norm{x_0}}\bigr)\right)
=\cO\!\left(\eta^{-1}\log\frac{1}{\varepsilon}\log\frac{\norm{x_0}}{\varepsilon}\right).
\qedhere
\]
\end{proof}

\section{Numerical illustrations}

\Cref{fig:experiments} compares Algorithm~\ref{alg:main} with fixed-step gradient descent, pure Polyak, and the block-scheduled \texttt{GDPolyak} from~\cite{DDJ} on three fourth-order instances from their repository: quartic Rosenbrock, overparameterized quadratic sensing, and the overparameterized single-neuron loss. Initializations match the repository; all hyperparameters---$(\eta,\tau)$ for Algorithm~\ref{alg:main}, the GD stepsize and block length for \texttt{GDPolyak}, and the GD stepsize---were tuned by grid search per instance. Each run stops when a problem-specific diagnostic crosses a threshold: Euclidean distance $<10^{-7}$ (Rosenbrock), Procrustes distance $<10^{-5}$ (quadratic sensing), or the DDJ optimality surrogate $<10^{-12}$ (single neuron). The iteration budget for each row is set so that \texttt{GDPolyak} reaches its threshold. With $(\eta,\tau)=(0.05,0.01)$, $(0.075,0.15)$, and $(1,0.0125)$, Algorithm~\ref{alg:main} reaches the threshold in $605$, $5418$, and $115$ iterations, compared with $2550$, $11055$, and $320$ for tuned \texttt{GDPolyak}.

\begin{figure}[t]
\centering
\includegraphics[width=\textwidth]{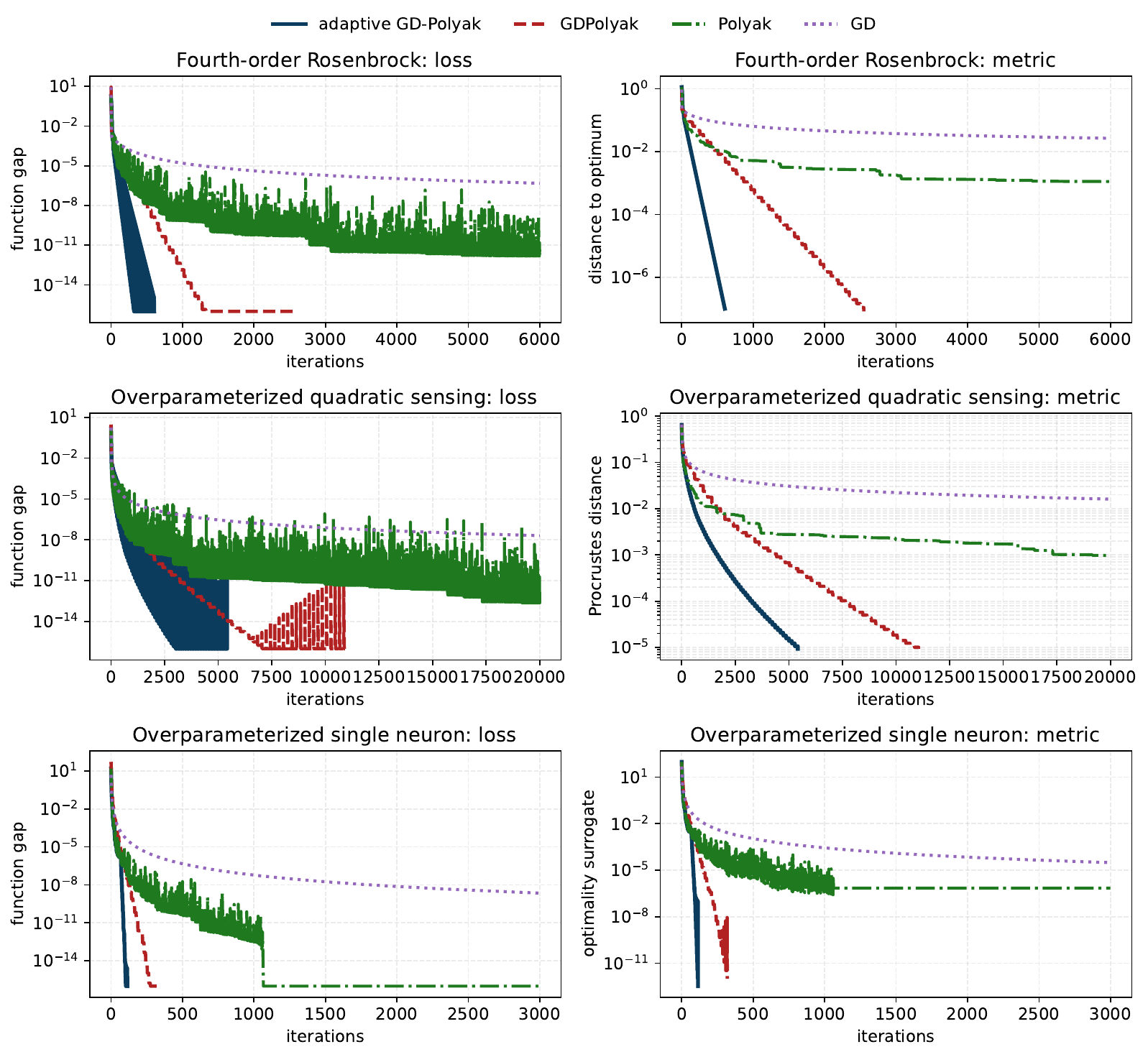}
\caption{DDJ fourth-order benchmarks. Rows: Rosenbrock (distance $10^{-7}$), quadratic sensing (Procrustes $10^{-5}$), single neuron (surrogate $10^{-12}$). Left: $f(x_k)-f_\star$. Right: stop diagnostic. Algorithm~\ref{alg:main}: $(\eta,\tau)=(0.05,0.01)$, $(0.075,0.15)$, $(1,0.0125)$. \texttt{GDPolyak}: (stepsize, block)${}=(0.03,50)$, $(0.075,200)$, $(1,10)$. GD stepsize: $0.03$, $0.075$, $1.5$.}
\label{fig:experiments}
\end{figure}

\bibliographystyle{unsrt}
\bibliography{convex_gd_polyak_note_refs}

@article{DDJ,
  author  = {Davis, Damek and Drusvyatskiy, Dmitriy and Jiang, Liwei},
  title   = {Gradient descent with adaptive stepsize converges (nearly) linearly under fourth-order growth},
  journal = {Math. Program.},
  year    = {2025},
  note    = {doi:10.1007/s10107-025-02290-5}
}

@article{Polyak63,
  author  = {Polyak, B. T.},
  title   = {Gradient methods for minimizing functionals},
  journal = {Zh. Vychisl. Mat. i Mat. Fiz.},
  volume  = {3},
  number  = {4},
  pages   = {643--653},
  year    = {1963}
}

@article{LuoTseng93,
  author  = {Luo, Z.-Q. and Tseng, P.},
  title   = {Error bounds and convergence analysis of feasible descent methods: a general approach},
  journal = {Ann. Oper. Res.},
  volume  = {46},
  number  = {1},
  pages   = {157--178},
  year    = {1993}
}

@article{DrusvyatskiyLewis18,
  author  = {Drusvyatskiy, D. and Lewis, A. S.},
  title   = {Error bounds, quadratic growth, and linear convergence of proximal methods},
  journal = {Math. Oper. Res.},
  volume  = {43},
  number  = {3},
  pages   = {919--948},
  year    = {2018}
}

@inproceedings{KarimiNutiniSchmidt16,
  author    = {Karimi, H. and Nutini, J. and Schmidt, M.},
  title     = {Linear convergence of gradient and proximal-gradient methods under the {P}olyak--{{\L}ojasiewicz} condition},
  booktitle = {Proceedings of ECML PKDD},
  pages     = {795--811},
  publisher = {Springer},
  year      = {2016}
}

@article{NecNesGli19,
  author  = {Necoara, I. and Nesterov, Yu. and Glineur, F.},
  title   = {Linear convergence of first order methods for non-strongly convex optimization},
  journal = {Math. Program.},
  volume  = {175},
  pages   = {69--107},
  year    = {2019}
}

@book{Polyak87,
  author    = {Polyak, B. T.},
  title     = {Introduction to Optimization},
  publisher = {Optimization Software, Inc.},
  address   = {New York},
  year      = {1987}
}

@misc{HazanKakade19,
  author = {Hazan, E. and Kakade, S.},
  title  = {Revisiting the {P}olyak step size},
  note   = {arXiv:1905.00313},
  year   = {2019}
}

@article{AttouchBolteSvaiter13,
  author  = {Attouch, H. and Bolte, J. and Svaiter, B. F.},
  title   = {Convergence of descent methods for semi-algebraic and tame problems: proximal algorithms, forward--backward splitting, and regularized {G}auss--{S}eidel methods},
  journal = {Math. Program.},
  volume  = {137},
  number  = {1},
  pages   = {91--129},
  year    = {2013}
}

@article{GelfandTsetlin61,
  author  = {Gelfand, I. M. and Tsetlin, M. L.},
  title   = {The principle of non-local search in automatic optimization systems},
  journal = {Dokl. Akad. Nauk SSSR},
  volume  = {137},
  pages   = {295--298},
  year    = {1961}
}

@article{Polyak64,
  author  = {Polyak, B. T.},
  title   = {Some methods of speeding up the convergence of iteration methods},
  journal = {USSR Comput. Math. Math. Phys.},
  volume  = {4},
  number  = {5},
  pages   = {1--17},
  year    = {1964}
}

@article{Nesterov83,
  author  = {Nesterov, Yu.},
  title   = {A method for solving the convex programming problem with convergence rate {$O(1/k^2)$}},
  journal = {Dokl. Akad. Nauk SSSR},
  volume  = {269},
  number  = {3},
  pages   = {543--547},
  year    = {1983}
}

@article{AttouchFadili22,
  author  = {Attouch, H. and Fadili, J.},
  title   = {From the ravine method to the {N}esterov method and vice versa: a dynamical system perspective},
  journal = {SIAM J. Optim.},
  volume  = {32},
  number  = {3},
  pages   = {2074--2101},
  year    = {2022}
}

@article{Lewis02,
  author  = {Lewis, A. S.},
  title   = {Active sets, nonsmoothness, and sensitivity},
  journal = {SIAM J. Optim.},
  volume  = {13},
  number  = {3},
  pages   = {702--725},
  year    = {2002}
}

@article{DrusvyatskiyLewis14,
  author  = {Drusvyatskiy, D. and Lewis, A. S.},
  title   = {Optimality, identifiability, and sensitivity},
  journal = {Math. Program.},
  volume  = {147},
  number  = {1},
  pages   = {467--498},
  year    = {2014}
}

@article{DavisJiang24,
  author  = {Davis, D. and Jiang, L.},
  title   = {A local nearly linearly convergent first-order method for nonsmooth functions with quadratic growth},
  journal = {Found. Comput. Math.},
  pages   = {1--82},
  year    = {2024}
}

@article{Young53,
  author  = {Young, D.},
  title   = {On {R}ichardson's method for solving linear systems with positive definite matrices},
  journal = {J. Math. Phys.},
  volume  = {32},
  number  = {1--4},
  pages   = {243--255},
  year    = {1953}
}

@inproceedings{KelnerEtAl22,
  author    = {Kelner, Jonathan and Marsden, Annie and Sharan, Vatsal and Sidford, Aaron and Valiant, Gregory and Yuan, Honglin},
  title     = {Big-Step-Little-Step: Efficient Gradient Methods for Objectives with Multiple Scales},
  booktitle = {Proceedings of Thirty Fifth Conference on Learning Theory},
  series    = {Proc. Mach. Learn. Res.},
  volume    = {178},
  pages     = {2431--2540},
  publisher = {PMLR},
  year      = {2022}
}

@article{Grimmer24,
  author  = {Grimmer, B.},
  title   = {Provably faster gradient descent via long steps},
  journal = {SIAM J. Optim.},
  volume  = {34},
  number  = {3},
  pages   = {2588--2608},
  year    = {2024}
}

@article{AltschulerParrilo23a,
  author  = {Altschuler, Jason M. and Parrilo, Pablo A.},
  title   = {Acceleration by stepsize hedging {I}: Multi-Step Descent and the Silver Stepsize Schedule},
  journal = {J. ACM},
  volume  = {72},
  number  = {2},
  pages   = {12:1--12:38},
  year    = {2025}
}

@article{AltschulerParrilo23b,
  author  = {Altschuler, Jason M. and Parrilo, Pablo A.},
  title   = {Acceleration by stepsize hedging {II}: Silver Stepsize Schedule for smooth convex optimization},
  journal = {Math. Program.},
  volume  = {213},
  number  = {1--2},
  pages   = {1105--1118},
  year    = {2025}
}

@article{BurerMonteiro03,
  author  = {Burer, S. and Monteiro, R. D. C.},
  title   = {A nonlinear programming algorithm for solving semidefinite programs via low-rank factorization},
  journal = {Math. Program.},
  volume  = {95},
  number  = {2},
  pages   = {329--357},
  year    = {2003}
}

@article{BurerMonteiro05,
  author  = {Burer, S. and Monteiro, R. D. C.},
  title   = {Local minima and convergence in low-rank semidefinite programming},
  journal = {Math. Program.},
  volume  = {103},
  number  = {3},
  pages   = {427--444},
  year    = {2005}
}

@article{ChiLuChen19,
  author  = {Chi, Y. and Lu, Y. M. and Chen, Y.},
  title   = {Nonconvex optimization meets low-rank matrix factorization: An overview},
  journal = {IEEE Trans. Signal Process.},
  volume  = {67},
  number  = {20},
  pages   = {5239--5269},
  year    = {2019}
}

@article{ZhuoEtAl21,
  author  = {Zhuo, J. and Kwon, J. and Ho, N. and Caramanis, C.},
  title   = {On the computational and statistical complexity of over-parameterized matrix sensing},
  journal = {J. Mach. Learn. Res.},
  volume  = {25},
  number  = {169},
  pages   = {1--47},
  year    = {2024}
}

@inproceedings{XuDu23,
  author    = {Xu, W. and Du, S. S.},
  title     = {Over-Parameterization Exponentially Slows Down Gradient Descent for Learning a Single Neuron},
  booktitle = {Proceedings of Thirty Sixth Conference on Learning Theory},
  series    = {Proc. Mach. Learn. Res.},
  volume    = {195},
  pages     = {1155--1198},
  publisher = {PMLR},
  year      = {2023}
}

@inproceedings{TuEtAl16,
  author    = {Tu, Stephen and Boczar, Ross and Simchowitz, Max and Soltanolkotabi, Mahdi and Recht, Ben},
  title     = {Low-rank Solutions of Linear Matrix Equations via {P}rocrustes Flow},
  booktitle = {Proceedings of The 33rd International Conference on Machine Learning},
  series    = {Proc. Mach. Learn. Res.},
  volume    = {48},
  pages     = {964--973},
  publisher = {PMLR},
  year      = {2016}
}

@inproceedings{GeJinZheng17,
  author    = {Ge, Rong and Jin, Chi and Zheng, Yi},
  title     = {No Spurious Local Minima in Nonconvex Low Rank Problems: A Unified Geometric Analysis},
  booktitle = {Proceedings of the 34th International Conference on Machine Learning},
  series    = {Proc. Mach. Learn. Res.},
  volume    = {70},
  pages     = {1233--1242},
  publisher = {PMLR},
  year      = {2017}
}

@article{ZhuEtAl18,
  author  = {Zhu, Z. and Li, Q. and Tang, G. and Wakin, M. B.},
  title   = {Global optimality in low-rank matrix optimization},
  journal = {IEEE Trans. Signal Process.},
  volume  = {66},
  number  = {13},
  pages   = {3614--3628},
  year    = {2018}
}

\end{document}